\newtheorem{theorem}{Theorem}[section]
\newtheorem{corollary}[theorem]{Corollary}
\newtheorem{lemma}[theorem]{Lemma}
\newcommand{\thickhline}{%
    \noalign {\ifnum 0=`}\fi \hrule height 1pt
    \futurelet \reserved@a \@xhline
}
\newcolumntype{"}{@{\hskip\tabcolsep\vrule width 1pt\hskip\tabcolsep}}
\title{\Large{Integral matrices as diagonal quadratic forms}}
\author{\large{Jungin Lee}}
\date{}
\newcommand\shorttitle{Integral matrices as diagonal quadratic forms}
\newcommand\authors{Jungin Lee}
\ifodd\value{page}
\authors
\shorttitle
\begin{document}
\maketitle

\vspace{0mm}

\textbf{Abstract.} In this paper, we investigate the conditions under which a diagonal quadratic form $\sum_{i=1}^{m}a_i X_i^2$ represents every $n \times n$ integral matrix, where $a_i$ ($1 \leq i \leq m$) are integers. For $n=2$, we give a necessary and sufficient condition. Also we give some sufficient conditions for each $n \geq 2$ where $a_i$ ($1 \leq i \leq m$) are pairwisely coprime. 

\vspace{5mm}

\textbf{Keywords:} integral matrix, diagonal quadratic form, sum of matrices.

\vspace{5mm}

\section{Introduction}

There are some papers on Waring's problem for integral matrices. Vaserstein \cite{var86} proved that for $n \geq 2$, every $n \times n$ integral matrix is a sum of three squares. Richman \cite{ric87} proved that for $n \geq k \geq 2$, every $n \times n$ integral matrix is a sum of seven $k$-th powers. In this paper, we generalize the result of \cite{var86} to the the diagonal quadratic forms. \\
The paper is organized as follows. In section \ref{section2} we provide a necessary and sufficient condition that an integral quadratic form $\sum_{i=1}^{m}a_i X_i^2$ is universal over $M_2(\mathbb{Z})$. For a positive integer $n \geq 2$, let $f(n)$ denote the smallest positive integer $m$ such that for every pairwise coprime $a_1, \cdots, a_m \in \mathbb{Z}$, $\sum_{i=1}^{m}a_iX_i^2$ is universal over $M_n(\mathbb{Z})$. In section \ref{section3} we give upper bounds of $f(n)$ for each $n \geq 2$. The result of section \ref{section3} is given in the table \ref{table1} below. \\

\begin{table}[h]
\centering
\begin{tabular}{|c|c|}\hline
  n & f(n) \\ \hline
  2 & 4 \\ \hline
  3 & $\leq $6 \\ \hline
  $2k \,\,(k \geq 2)$ & $\leq $6 \\ \hline
  $2k+1 \,\,(k \geq 2)$ & $\leq $8 \\ \hline
\end{tabular}
\caption{Upper bounds of $f(n)$}
\label{table1} 
\end{table}

\section{The Case $n=2$} \label{section2}

\begin{theorem} \label{thm2.1}
An integral quadratic form $\sum_{i=1}^{m}a_i X_i^2$ is universal over $M_2(\mathbb{Z})$ if and only if there is no prime which divides $m-1$ numbers of $a_1, \cdots, a_m$ and there exist three numbers of $a_1, \cdots, a_m$ which are not multiples of $4$.
\end{theorem}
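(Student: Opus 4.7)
The plan is to address necessity and sufficiency separately, with the two conditions corresponding to obstructions at distinct prime powers.

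For \textbf{necessity}, both conditions arise by reduction modulo a prime power. For (i), suppose a prime $p$ divides $m-1$ of the coefficients; after relabelling, $p \mid a_2, \ldots, a_m$. Then $\sum_i a_i X_i^2 \equiv a_1 X_1^2 \pmod p$, and I would argue that the image of $X \mapsto X^2$ in $M_2(\mathbb{F}_p)$ is a proper subset. By Cayley--Hamilton, $X^2 = (\operatorname{tr} X) X - (\det X) I$ lies in the $\mathbb{F}_p$-span of $\{I, X\}$, and a direct computation shows $\left(\begin{smallmatrix} 0 & 1 \\ 0 & 0 \end{smallmatrix}\right)$ is not a square, since the system $b(a+d) = 1$, $c(a+d) = 0$ is inconsistent in any field. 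For (ii), if only two of the $a_i$ avoid $4\mathbb{Z}$, reduction modulo $4$ leaves $a_1 X_1^2 + a_2 X_2^2$, and I would split on $(a_1, a_2) \pmod 4 \in \{1, 2, 3\}^2$; in each subcase I would exhibit a target in $M_2(\mathbb{Z}/4\mathbb{Z})$ not in the image. The easiest subcase is $a_1 \equiv a_2 \equiv 2 \pmod 4$, where every entry of $a_1 X_1^2 + a_2 X_2^2 \pmod 4$ is even, so any matrix with an odd entry serves as obstruction.

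For \textbf{sufficiency}, given $A \in M_2(\mathbb{Z})$ I would solve the equation locally at each prime and then patch. At odd primes $p$, condition (i) supplies at least two $a_i$ that are units modulo $p$; I would verify that the binary subform $a_i X_i^2 + a_j X_j^2$ is already universal over $M_2(\mathbb{F}_p)$, using Cayley--Hamilton to reduce the matrix problem to a system in $\operatorname{tr} X_i$ and $\det X_i$. Hensel's lemma then lifts a mod-$p$ solution to $\mathbb{Z}_p$, since the differential $(Y_i) \mapsto \sum a_i(X_i Y_i + Y_i X_i)$ is surjective modulo $p$ at a suitably chosen base point. At $p = 2$, condition (ii) provides three $a_i$ not divisible by $4$; I would check directly that their contributions cover $M_2(\mathbb{Z}/4\mathbb{Z})$, then lift to $\mathbb{Z}_2$ by a refined Hensel-type argument (the differential of squaring is more delicate modulo $2$). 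Combining local solutions via the Chinese remainder theorem then yields the global $X_i$.

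The main obstacle, I expect, is the case analysis for the necessity of (ii). Unlike (i), where $\left(\begin{smallmatrix} 0 & 1 \\ 0 & 0 \end{smallmatrix}\right)$ is a uniform obstruction across all primes, the mod-$4$ obstruction depends subtly on $(a_1, a_2) \pmod 4$: for example, $\left(\begin{smallmatrix} 0 & 1 \\ 0 & 0 \end{smallmatrix}\right) \pmod 4$ is not of the form $X_1^2 + X_2^2$ but is of the form $X_1^2 - X_2^2$, so a different target matrix must be chosen when $a_1 \equiv 1$, $a_2 \equiv 3 \pmod 4$. The cleanest resolution will likely be to extract, for each configuration, a scalar invariant in $\mathbb{Z}/4\mathbb{Z}$ --- perhaps built from entries, the trace, or the difference of diagonal entries --- that the form cannot attain. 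The same combinatorics should then feed back into the $p = 2$ verification required in the sufficiency proof.
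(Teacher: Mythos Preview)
Your necessity argument is sound and close in spirit to the paper's, though you work harder than needed on condition (ii). Once condition (i) is known, the two coefficients $a_1,a_2$ that survive modulo $4$ must both be \emph{odd}: if either were $\equiv 2\pmod 4$ then $2$ would divide $m-1$ of the $a_i$, contradicting (i). Hence only $X_1^2+X_2^2$ and $X_1^2-X_2^2$ over $\mathbb{Z}/4\mathbb{Z}$ need to be ruled out, and the paper does this with the single targets $\left(\begin{smallmatrix}1&0\\0&3\end{smallmatrix}\right)$ and $\left(\begin{smallmatrix}0&0\\0&2\end{smallmatrix}\right)$, avoiding the full $\{1,2,3\}^2$ case split.

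The genuine gap is in your sufficiency argument. Solving $\sum a_iX_i^2=A$ over every $\mathbb{Z}_p$ and then ``combining local solutions via the Chinese remainder theorem'' does not produce a solution over $\mathbb{Z}$: CRT glues finitely many congruences, or at best lands you in $\widehat{\mathbb{Z}}=\prod_p\mathbb{Z}_p$, never in $\mathbb{Z}$ itself. Passing from everywhere-local to global integral solvability for a system of quadratic equations is a strong-approximation statement, not a CRT statement, and you would need to identify an algebraic group acting transitively on solutions and verify strong approximation for it off a suitable set of places --- none of which you do, and none of which is routine here (note also that you omit the archimedean place entirely). The paper sidesteps this completely with a direct global construction: writing $X_i=\left(\begin{smallmatrix}x_i&y_i\\z_i&c_i-x_i\end{smallmatrix}\right)$ for $i<m$ and $X_m=\left(\begin{smallmatrix}0&N\\1&0\end{smallmatrix}\right)$ turns the matrix equation into three \emph{linear} Diophantine equations in the $x_i,y_i,z_i$ plus one residual congruence that is handled by the free parameter $N$. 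The hypotheses on the $a_i$ are used precisely to choose the traces $c_i$ so that $\gcd(a_1c_1,\ldots,a_{m-1}c_{m-1})\in\{1,2\}$ with the correct parity, making the linear system solvable over $\mathbb{Z}$; the remaining adjustment modulo $a_m$ is then achieved by shifting the $y_i,z_i$ along the kernel of the linear system. Your local computations would at best reproduce the mod-$p$ shadow of this construction without supplying the global glue.
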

\begin{proof}
If $\sum_{i=1}^{m}a_i X_i^2$ is universal over $M_2(\mathbb{Z})$, 
then it is universal over $M_2(\mathbb{Z}_r)$ for every positive integer $r$. 
Suppose that a prime $p$ divides $m-1$ numbers of $a_1, \cdots, a_m$. Then $\sum_{i=1}^{m}a_i X_i^2$ is universal over $M_2(\mathbb{Z}_p)$ if and only if $X^2$ is universal over $M_2(\mathbb{Z}_p)$, which is impossible since 
$ \begin{bmatrix}
0 & 0\\ 
0 & 0
\end{bmatrix} ^2
=  \begin{bmatrix}
0 & 0\\ 
1 & 0
\end{bmatrix}^2$ in $M_2(\mathbb{Z}_p)$. Now suppose that $m-2$ numbers of $a_1, \cdots a_m$ (without loss of generality, namely, $a_3, \cdots, a_m$) are multiples of $4$. If $\sum_{i=1}^{m}a_i X_i^2$ is universal over $M_2(\mathbb{Z}_4)$, then $a_1X_1^2+a_2X_2^2$ is universal over $M_2(\mathbb{Z}_4)$, so $a_1$ and $a_2$ are odd by above result. However, $X^2+Y^2$ and $X^2-Y^2$ are not universal over $M_2(\mathbb{Z}_4)$, since each of them does not represent $\begin{bmatrix}
1 & 0\\ 
0 & 3
\end{bmatrix}$ and $\begin{bmatrix}
0 & 0\\ 
0 & 2
\end{bmatrix}$, respectively. Thus the condition in the theorem is necessary. \\
Suppose that the condition in the theorem holds. It is easy to show that
$\sum_{i=1}^{m-1}a_i\begin{bmatrix}
x_i & y_i\\ 
z_i & c_i-x_i
\end{bmatrix}^2+a_m\begin{bmatrix}
0 & N\\ 
1 & 0
\end{bmatrix}^2=\begin{bmatrix}
p & q\\ 
r & s
\end{bmatrix}$ if and only if the four equations below hold. 
\begin{subequations} \label{eq1}
\begin{gather}
\sum_{i=1}^{m-1}a_i(x_i^2+y_iz_i)+a_mN=p \label{eq1a} \\
\sum_{i=1}^{m-1}a_ic_iy_i=q \label{eq1b} \\
\sum_{i=1}^{m-1}a_ic_iz_i=r \label{eq1c} \\
\sum_{i=1}^{m-1}a_ic_ix_i=\frac{p-s+\sum_{i=1}^{m-1}a_ic_i^2}{2} \label{eq1d}
\end{gather}
\end{subequations} \\

\textbf{Case I.} There exist three odd numbers among $a_1, \cdots, a_m$. Without loss of generality, assume that $a_1, a_2$ and $a_m$ are odd. Let $c_i=2a_i \,\,(3 \leq i \leq m-1)$, $c_1=a_m$ and $c_2 \in \left \{ a_2, 2a_2 \right \} \,\,(c_2 \equiv p-s+1 \,\,(mod \,\,2))$. For a prime factorization $a_m=p_1^{e_1} \cdots p_r^{e_r}$, there exist $\alpha_j \neq \beta_j$ such that for every $1 \leq j \leq r$, $p_j$ does not divide $a_{\alpha_j}$ and $a_{\beta_j}$. If we replace $c_{\alpha_j}$ to $p_j c_{\alpha_j}$ for $1 \leq j \leq r$, $p_j$ does not divide $a_{\alpha_j}c_{\alpha_j}^2+a_{\beta_j}c_{\beta_j}^2$, $gcd(a_1c_1, \cdots, a_{m-1}c_{m-1})=1$ and $p-s+\sum_{i=1}^{m-1}a_ic_i^2 \equiv 0 \,\,(mod \,\,2)$. Now we can choose $x_i, y_i, z_i$ ($1 \leq i \leq m-1$) which give a solution to the equations \eqref{eq1b}, \eqref{eq1c} and \eqref{eq1d}. \\ 
Denote $A=\sum_{i=1}^{m-1}a_i(x_i^2+y_iz_i)-p$ and $u_i=\frac{a_m}{p_i^{e_i}}$ for each $1 \leq i \leq r$. 
If we replace $y_{\alpha_i}, y_{\beta_i}, z_{\alpha_i}$ and $z_{\beta_i}$ by 
$y_{\alpha_i}+k u_i a_{\beta_i}c_{\beta_i}, \,
y_{\beta_i}-k u_i a_{\alpha_i}c_{\alpha_i}, \,
z_{\alpha_i}+t u_i a_{\beta_i}c_{\beta_i}$ and 
$z_{\beta_i}-t u_i a_{\alpha_i}c_{\alpha_i}$, the equations \eqref{eq1b}, \eqref{eq1c} and \eqref{eq1d} still hold and $A$ is changed by 
$A+u_ia_{\alpha_i}a_{\beta_i}(tP_i+k(Q_i+tR_i))$, where 
$P_i=c_{\beta_i}y_{\alpha_i}-c_{\alpha_i}y_{\beta_i}, \,
Q_i=c_{\beta_i}z_{\alpha_i}-c_{\alpha_i}z_{\beta_i}$
and $R_i=u_i(a_{\alpha_i}c_{\alpha_i}^2+a_{\beta_i}c_{\beta_i}^2)$. 
Since $R_i$ is not a multiple of $p_i$, there exists $t \in \mathbb{Z}$ such that $Q_i+tR_i$ is not a multiple of $p_i$. Now there exists $k \in \mathbb{Z}$ such that $p_i^{e_i}$ divides $A+u_ia_{\alpha_i}a_{\beta_i}(tP_i+k(Q_i+tR_i))$. 
After repeating this process for $1 \leq i \leq r$, $A$ becomes a multiple of $a_m$, so there exists an integer $N$ such that the equation \eqref{eq1a} holds. \\

\textbf{Case II.} There exist exactly two odd numbers among $a_1, \cdots, a_m$, one number is of the form $4k+2$ and not all of $p-s$, $q$ and $r$ are even. Without loss of generality, assume that $a_1$ and $a_2$ are odd and $a_m=2a_m'$ for an odd number $a_m'$. 
Let $c_i=2a_i \,\,(3 \leq i \leq m-1)$, $c_1=a_m'$ and $c_2 \in \left \{ a_2, 2a_2 \right \} \,\,(c_2 \equiv p-s+1 \,\,(mod \,\,2))$. 
By repeating the argument of Case I, we can obtain $c_i, x_i, y_i$ and $z_i$ such that the equations \eqref{eq1b}, \eqref{eq1c} and \eqref{eq1d} hold and $A$ is a multiple of $a_m'$. Note that this procedure does not change the parity of $c_i$, so $c_1$ is odd and $c_2 \equiv p-s+1 \,\,(mod \,\,2)$ after the procedure. Since $A \equiv x_1+y_1z_1+x_2+y_2z_2-p \,\, (mod \,\,2)$, it is enough to show that we can change the parity of $x_1+y_1z_1+x_2+y_2z_2$. \\

\begin{enumerate}
  \item $p-s$ is odd : $c_2$ is even. If we replace $x_1$ and $x_2$ by $x_1+a_2c_2$ and $x_2-a_1c_1$, the equation \eqref{eq1d} still holds and the parity of $x_1+x_2$ changes. 
  \item $p-s$ is even, $q$ is odd : $c_2$ is odd. By the equation \eqref{eq1b}, $y_1+y_2$ is odd. If we replace $z_1$ and $z_2$ by $z_1+a_2c_2$ and $z_2-a_1c_1$, the equation \eqref{eq1c} still holds and the parity of $y_1z_1+y_2z_2$ changes
  . 
  \item $p-s$ is even, $r$ is odd : symmetric to (2). \\
\end{enumerate}

\textbf{Case III.} There exist exactly two odd numbers among $a_1, \cdots, a_m$, one number is of the form $4k+2$ and all of $p-s$, $q$ and $r$ are even. Without loss of generality, assume that $a_2$ and $a_m$ are odd and $a_1=2a_1'$ for an odd number $a_1'$. 
Let $c_i=2a_i \,\,(2 \leq i \leq m-1)$ and $c_1 \in \left \{ a_m, 2a_m \right \} \,\,(c_1 \equiv \frac{p-s}{2} \,\,(mod \,\,2))$. Then $p-s+\sum_{i=1}^{m-1}a_ic_i^2 \equiv p-s+2c_1 \equiv 0 \,\, (mod \,\, 4)$. Now $q$ and $r$ are even and $gcd(a_1c_1, \cdots, a_{m-1}c_{m-1})=2$, so we can choose $x_i, y_i, z_i$ ($1 \leq i \leq m-1$) which give a solution to the equations \eqref{eq1b}, \eqref{eq1c} and \eqref{eq1d}. Now by repeating the argument of Case I, we can obtain a solution of \eqref{eq1a}, \eqref{eq1b}, \eqref{eq1c} and \eqref{eq1d}. 
\end{proof} 

For the case $m=3$, we can restate theorem \ref{thm2.1} as follows. 

\begin{corollary} \label{cor2.2}
An integral quadratic form $aX^2+bY^2+cZ^2$ is universal over $M_2(\mathbb{Z})$ if and only if $(a,b)=(b,c)=(c,a)=1$ and $abc$ is not a multiple of $4$. 
\end{corollary}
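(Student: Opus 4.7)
The plan is to derive this corollary directly from Theorem~\ref{thm2.1} applied to $m = 3$, by translating each of the theorem's two hypotheses into the language of the corollary. No new content is needed, only a careful reformulation, so I do not expect any genuine obstacle.

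First I would handle the divisibility condition on the coefficients. When $m = 3$, the requirement that no prime divides $m - 1 = 2$ of the $a_i$ becomes: no prime divides two of $a, b, c$, which is visibly equivalent to the pairwise coprimality $(a,b) = (b,c) = (c,a) = 1$.

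Next I would interpret the ``three numbers not multiples of $4$'' condition. For $m = 3$ this forces all three of $a, b, c$ to be non-multiples of $4$. Under the pairwise coprimality already established, at most one of $a, b, c$ can be even, so writing $v_2$ for the $2$-adic valuation we have $v_2(abc)=\max(v_2(a),v_2(b),v_2(c))$. Hence ``none of $a,b,c$ is a multiple of $4$'' is the same as $v_2(abc)\le 1$, which is exactly the assertion that $abc$ is not a multiple of $4$.

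Combining these two translations reduces the biconditional in the corollary to the biconditional in Theorem~\ref{thm2.1}, completing the argument. The one step requiring mild care is the second translation, where the pairwise coprimality is essential in order to conclude that the $2$-adic valuation of $abc$ is attained at a single factor; without this hypothesis the equivalence would fail (for example $a=b=2$, $c=1$ gives $4\mid abc$ even though no single coefficient is divisible by $4$), but since coprimality is already part of the hypothesis the implication is immediate.
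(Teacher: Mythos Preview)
Your proposal is correct and follows exactly the paper's approach: the paper simply states that Corollary~\ref{cor2.2} is the restatement of Theorem~\ref{thm2.1} for $m=3$, and you supply precisely the routine translation of the two hypotheses that this restatement requires. Your handling of the $2$-adic valuation (using pairwise coprimality to reduce $v_2(abc)$ to $\max_i v_2(a_i)$) is the only step needing any justification, and you address it cleanly.
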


\section{General Case} \label{section3}
By theorem \ref{thm2.1}, $f(2)=4$. In this section, we will assume that $a_1, \cdots, a_m$ are pairwise coprime integers. First we prove the following lemma based on lemma 3 of \cite{var87}.

\begin{lemma} \label{lem3.1}
For every integer $p,q \geq 2$, $f(p+q) \leq 2+max\left \{ f(p),f(q) \right \}$.
\end{lemma}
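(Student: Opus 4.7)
The plan is to use two of the $m := 2 + \max\{f(p), f(q)\}$ squares to kill the off-diagonal blocks of an arbitrary $M \in M_{p+q}(\mathbb{Z})$, and then invoke the universality hypothesis at sizes $p$ and $q$ on each remaining diagonal block. Without loss of generality assume $f(p) \leq f(q)$, fix pairwise coprime integers $a_1, \ldots, a_m$, and write
$$
M = \begin{bmatrix} A & B \\ C & D \end{bmatrix}
$$
with $A \in M_p(\mathbb{Z})$, $D \in M_q(\mathbb{Z})$, $B$ of size $p \times q$ and $C$ of size $q \times p$.

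First I would exploit $\gcd(a_{m-1}, a_m) = 1$: pick $\alpha, \beta \in \mathbb{Z}$ with $\alpha a_{m-1} + \beta a_m = 1$, and set
$$
X_{m-1} = \begin{bmatrix} I_p & \alpha B \\ \alpha C & 0 \end{bmatrix}, \qquad X_m = \begin{bmatrix} I_p & \beta B \\ \beta C & 0 \end{bmatrix}.
$$
A direct block multiplication, combined with $a_{m-1}\alpha + a_m\beta = 1$, gives
$$
a_{m-1} X_{m-1}^2 + a_m X_m^2 = \begin{bmatrix} (a_{m-1}+a_m) I_p + \gamma BC & B \\ C & \gamma CB \end{bmatrix},
$$
where $\gamma := a_{m-1}\alpha^2 + a_m\beta^2$. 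Hence $M - a_{m-1} X_{m-1}^2 - a_m X_m^2$ is block diagonal, with some $A' \in M_p(\mathbb{Z})$ in the top-left and some $D' \in M_q(\mathbb{Z})$ in the bottom-right.

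Second, I would represent $A'$ and $D'$ using the remaining $m - 2 = f(q) \geq f(p)$ coefficients $a_1, \ldots, a_{f(q)}$. The tuple $a_1, \ldots, a_{f(p)}$ is pairwise coprime, so by the definition of $f(p)$ the form $\sum_{i=1}^{f(p)} a_i Y_i^2$ is universal over $M_p(\mathbb{Z})$; padding with $Y_i = 0$ for $i > f(p)$ yields $Y_1, \ldots, Y_{f(q)} \in M_p(\mathbb{Z})$ with $A' = \sum_{i=1}^{f(q)} a_i Y_i^2$. Similarly, by the definition of $f(q)$ there exist $Z_1, \ldots, Z_{f(q)} \in M_q(\mathbb{Z})$ with $D' = \sum_{i=1}^{f(q)} a_i Z_i^2$. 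Taking $X_i = \begin{bmatrix} Y_i & 0 \\ 0 & Z_i \end{bmatrix}$ for $1 \leq i \leq f(q)$ makes $X_i^2$ block diagonal with blocks $Y_i^2, Z_i^2$, and therefore $M = \sum_{i=1}^{m} a_i X_i^2$.

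The main obstacle is settling on the ansatz for $X_{m-1}, X_m$: the asymmetric shape with $I_p$ in the upper-left and $0$ in the lower-right is precisely what forces the off-diagonal blocks of $X^2$ to depend linearly on $B$ and $C$, after which Bezout allows two such squares to reproduce $B, C$ exactly while leaving a controlled block-diagonal remainder. Once that reduction is in hand, the rest is bookkeeping against the universality hypothesis at the smaller sizes $p$ and $q$.
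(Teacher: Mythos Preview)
Your proof is correct and follows essentially the same approach as the paper: two Bezout-weighted squares with an asymmetric block ansatz kill the off-diagonal blocks, and the remaining coefficients handle the block-diagonal residue via the definition of $f$ at sizes $p$ and $q$. The only cosmetic differences are that the paper places the identity block in the lower-right corner (using $\begin{bmatrix} O & tY \\ tZ & I \end{bmatrix}$) and uses $a_1,a_2$ rather than $a_{m-1},a_m$ for the reduction step.
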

\begin{proof}
Let $t_1$ and $t_2$ be integers such that $a_1t_1+a_2t_2=1$ and $m=max\left \{ f(p),f(q) \right \}$. Then for every $A=\begin{bmatrix}
X & Y\\ 
Z & W
\end{bmatrix} \in M_{p+q}(\mathbb{Z})$ ($X\in M_{p}(\mathbb{Z})$ and $W\in M_{q}(\mathbb{Z})$), $A-a_1\begin{bmatrix}
O & t_1Y\\ 
t_1Z & I
\end{bmatrix}^2
-a_2\begin{bmatrix}
O & t_2Y\\ 
t_2Z & I
\end{bmatrix}^2
=\begin{bmatrix}
X' & O\\ 
O & W'
\end{bmatrix}$. Now there exist $X_i \in M_p(\mathbb{Z}), \, W_i \in M_q(\mathbb{Z})\: \: (3\leq i\leq m+2)$ such that $X'=\sum_{i=3}^{m+2}a_iX_i^2$ and $W'=\sum_{i=3}^{m+2}a_iW_i^2$, so we can write $\begin{bmatrix}
X' & O\\ 
O & W'
\end{bmatrix}=\sum_{i=3}^{m+2}a_i\begin{bmatrix}
X_i & O\\ 
O & W_i
\end{bmatrix}^2$. Thus $f(p+q) \leq m+2$. 
\end{proof}

\begin{lemma} \label{lem3.2}
$f(3) \leq 6$. 
\end{lemma}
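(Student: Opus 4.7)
The plan is to adapt the block decomposition of Lemma~\ref{lem3.1} to the case $n=3=2+1$, which that lemma does not cover (it requires both block sizes to be at least $2$). Write the target as $A=\begin{bmatrix} X & Y \\ Z & w\end{bmatrix}$ with $X\in M_2(\mathbb{Z})$, $Y$ a $2\times 1$ column, $Z$ a $1\times 2$ row and $w\in\mathbb{Z}$. Using $\gcd(a_1,a_2)=1$ I pick integers $t_1,t_2,d_1,d_2$ with $a_1t_1d_1+a_2t_2d_2=1$ and set $M_j=\begin{bmatrix} O & t_jY \\ t_jZ & d_j\end{bmatrix}$ for $j=1,2$. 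A direct block computation then shows
\[
A-a_1M_1^2-a_2M_2^2=\begin{bmatrix} X-\alpha YZ & 0 \\ 0 & w-\alpha ZY-\beta\end{bmatrix}=:\begin{bmatrix} X' & 0 \\ 0 & w'\end{bmatrix},
\]
where $\alpha:=a_1t_1^2+a_2t_2^2$ and $\beta:=a_1d_1^2+a_2d_2^2$.

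Since among four pairwise coprime integers at most one is divisible by $4$, after reindexing I may assume that $a_3a_4a_5$ is not a multiple of $4$, so that $a_3P^2+a_4Q^2+a_5R^2$ is universal over $M_2(\mathbb{Z})$ by Corollary~\ref{cor2.2}. I take block-diagonal $X_i=\begin{bmatrix} P_i & 0 \\ 0 & q_i\end{bmatrix}$ for $i=3,4,5$ and an off-diagonal $X_6=\begin{bmatrix} O & Y_6 \\ Z_6 & 0\end{bmatrix}$. Each $X_i^2$ is then block-diagonal, and matching the two diagonal blocks of $\sum_{i=3}^{6}a_iX_i^2=\begin{bmatrix} X' & 0 \\ 0 & w'\end{bmatrix}$ yields the system
\[
\sum_{i=3}^{5}a_iP_i^2+a_6Y_6Z_6=X',\qquad
\sum_{i=3}^{5}a_iq_i^2+a_6Z_6Y_6=w'.
\]
Writing $k:=Z_6Y_6$ and realising it via $Y_6=\begin{pmatrix} 1 \\ 0\end{pmatrix}$, $Z_6=\begin{pmatrix} k & 0\end{pmatrix}$, the $(3,3)$-equation becomes the congruence $\sum_{i=3}^{5}a_iq_i^2\equiv w'\pmod{a_6}$, and once integers $q_3,q_4,q_5,k$ satisfying it are fixed the $(1,1)$-equation $\sum_{i=3}^{5}a_iP_i^2=X'-a_6\begin{pmatrix} k & 0 \\ 0 & 0\end{pmatrix}$ is solvable by Corollary~\ref{cor2.2}.

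The main obstacle is showing that the ternary form $a_3q_3^2+a_4q_4^2+a_5q_5^2$ represents every residue modulo $a_6$. Modulo odd prime-power divisors this is standard---Chevalley--Warning produces a non-trivial representation modulo $p$ and Hensel's lemma lifts---but modulo powers of $2$ one can meet a Gauss-type obstruction, e.g.\ $7\pmod 8$ is not a sum of three integer squares when $a_3\equiv a_4\equiv a_5\equiv 1\pmod 8$. To handle this I exploit the remaining freedom in the reduction step: varying the Bezout data $(t_1,t_2,d_1,d_2)$ subject to $a_1t_1d_1+a_2t_2d_2=1$ changes $(\alpha,\beta)\pmod{a_6}$ and hence shifts $w'\pmod{a_6}$, so a short case analysis on the $2$-adic valuation of $a_6$ and the residues of $a_1,a_2$ modulo $8$ shows that $w'$ can always be moved into a residue class represented by the ternary form.
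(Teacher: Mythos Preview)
Your block-decomposition route is genuinely different from the paper's argument and, once the final step is tightened, it works. The paper does not reduce to the $2\times 2$ case at all: it first spends $a_1,a_2$ to force the $(3,1)$ entry to equal $1$ and the trace to be even, then conjugates by an explicit upper-triangular $T$ so that the resulting $3\times 3$ matrix can be written as $a_3X_3^2+a_4X_4^2+a_5Y_5^2+a_6Y_6^2$ via hand-built matrices (with $X_i^4=X_i$, in the spirit of Vaserstein's three-squares proof). Your approach instead uses $a_1,a_2$ to kill the off-diagonal blocks, then invokes Corollary~\ref{cor2.2} for the $M_2$ block and reduces the scalar block to a ternary congruence. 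That is cleaner conceptually and reuses Section~\ref{section2} in a pleasant way.

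The gap is the last paragraph. You assert that ``a short case analysis'' on the $2$-adic valuation of $a_6$ and the residues of $a_1,a_2$ lets you move $w'$ into a class represented by $a_3q_3^2+a_4q_4^2+a_5q_5^2$, but you do not carry it out, and under the constraint $a_1t_1d_1+a_2t_2d_2=1$ the achievable pairs $(\alpha,\beta)\pmod{2^v}$ are genuinely restricted (both are values of a binary form $a_1x^2+a_2y^2$, and the Bezout relation couples the parities of $t_i,d_i$). This is not obviously enough to hit every needed residue class for every choice of $a_1,\dots,a_6$ and every $w$, $ZY$.

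The fix is simpler than the analysis you propose: use the full freedom to relabel all six coefficients, not just $a_3,\dots,a_6$. Among six pairwise coprime integers at most one is even; put that one (if it exists) in position $1$. Then $a_3,a_4,a_5,a_6$ are all odd, so Corollary~\ref{cor2.2} applies to $a_3,a_4,a_5$ automatically, and the ternary congruence $a_3q_3^2+a_4q_4^2+a_5q_5^2\equiv w'\pmod{a_6}$ is solvable for every $w'$: modulo each odd prime power $p^e\|a_6$ the binary form $a_3x^2+a_4y^2$ with $a_3,a_4\in(\mathbb{Z}/p)^{\times}$ already represents every residue (a pigeonhole count), and any representation with a unit coordinate lifts by Hensel. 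With this relabeling you never need to vary the Bezout data, and your argument is complete.
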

\begin{proof}
Without loss of generality, assume that $a_2, a_3$ and $a_4$ are odd. Fix a matrix $A \in M_3(\mathbb{Z})$.   Let $t_1, t_2, \cdots, t_6$ be integers such that $a_1t_1+a_2t_2=a_3t_3+a_4t_4=a_5t_5+a_6t_6=1$ and $m=A_{31}-1$. Choose $u \in \left \{ 0,1 \right \}$ such that $u \equiv tr(A)-a_1-1 \,\,(mod \,\,2)$. Then 
$A'=A-a_1\begin{bmatrix}
1 & 0 & 0\\ 
0 & 0 & 0\\ 
mt_1 & 0 & 0
\end{bmatrix}^2
-a_2\begin{bmatrix}
1 & 0 & 0\\ 
0 & u & 0\\ 
mt_2 & 0 & 0
\end{bmatrix}^2
=\begin{bmatrix}
c_1 & * & *\\ 
c_2 & * & *\\ 
1 & c_3 & c_4
\end{bmatrix}$ and $tr(A')$ is even. Let $\tilde{c_3}=c_3+a_3+a_4$ and $T=\begin{bmatrix}
1 & -\tilde{c_3} & c_1\\ 
0 & 1 & c_2\\ 
0 & 0 & 1
\end{bmatrix}$. Then $T^{-1}=\begin{bmatrix}
1 & \tilde{c_3} & -c_2\tilde{c_3}-c_1\\ 
0 & 1 & -c_2\\ 
0 & 0 & 1
\end{bmatrix}$ and $T^{-1}A'T=\begin{bmatrix}
0 & a & b\\ 
0 & c & d\\ 
1 & -a_3-a_4 & e
\end{bmatrix}$. $tr(T^{-1}A'T)=tr(A')$ implies that $c-e$ is even. 
Let $Q_3=t_3b, Q_4=t_4b, u_3=t_3\frac{c-e-(a_3+a_4)}{2}, u_4=t_4\frac{c-e-(a_3+a_4)}{2}, P=d-a_3(1+u_3+u_3^2)-a_4(1+u_4+u_4^2), P_3=t_3(a-(a_5+a_6)P), P_4=t_4(a-(a_5+a_6)P)$ and $Q=c-(a_3u_3+a_4u_4)=e+a_3(1+u_3)+a_4(1+u_4)$. Also let 
$X_i=\begin{bmatrix}
0 & P_i & Q_i\\ 
0 & u_i & 1+u_i+u_i^2\\ 
0 & -1 & -1-u_i
\end{bmatrix} \,\, (3 \leq i \leq 4)$ and 
$Y_i=\begin{bmatrix}
0 & 0 & P\\ 
t_i & 0 & t_iQ\\
0 & 1 & 0
\end{bmatrix} \,\, (5 \leq i \leq 6)$. (The structure of $X_i$ comes from the proof of lemma 2 in \cite{var86}.) Then $X_i=X_i^4 \,\, (3 \leq i \leq 4)$ and 
$T^{-1}A'T=a_3(X_3^2)^2+a_4(X_4^2)^2+a_5Y_5^2+a_6Y_6^2$. If we denote $\widetilde{X_i}=TX_iT^{-1} \,\, (3 \leq i \leq 4)$ and $\widetilde{Y_i}=TY_iT^{-1} \,\, (5 \leq i \leq 6)$, then $A'=a_3(\widetilde{X_3}^2)^2+a_4(\widetilde{X_4}^2)^2+a_5\widetilde{Y_5}^2+a_6\widetilde{Y_6}^2$. 
\end{proof} 

\begin{lemma} \label{lem3.3}
Let $a_1$ and $a_2$ be relatively prime odd integers. Then for every integer $m$, there exist $c,d,u,v \in \mathbb{Z}$ such that $gcd(a_1(u-c), a_2(v-d))=1$ and $a_1(c^2+2u)+a_2(d^2+2v)=m$. 
\end{lemma}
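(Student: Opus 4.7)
Setting $\alpha = u - c$ and $\beta = v - d$, the target equation $a_1(c^2+2u)+a_2(d^2+2v)=m$ becomes
\[
a_1 c(c+2) + a_2 d(d+2) + 2(a_1\alpha + a_2\beta) = m,
\]
so it suffices to produce integers $c, d, \alpha, \beta$ with $a_1 \alpha + a_2 \beta = N$, where $N := (m - a_1 c(c+2) - a_2 d(d+2))/2$, satisfying $\gcd(a_1 \alpha, a_2 \beta) = 1$. Since $\gcd(a_1, a_2) = 1$, this last condition unpacks into the three coprimalities $\gcd(\alpha, a_2) = \gcd(\beta, a_1) = \gcd(\alpha, \beta) = 1$. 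The plan is: first arrange $c, d$ so that $N$ is an integer coprime to $a_1 a_2$, then produce $\alpha, \beta$.

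For the first step, integrality of $N$ forces $c + d \equiv m \pmod{2}$ (using that the $a_i$ are odd), and $\gcd(N, a_2) = 1$ is equivalent---since $2$ is a unit mod the odd $a_2$---to $a_1 c(c+2) \not\equiv m \pmod{p}$ for every prime $p \mid a_2$. Writing $c(c+2) = (c+1)^2 - 1$, this is a quadratic-residue condition forbidding at most two residues of $c$ modulo each such (necessarily odd) prime $p$, leaving at least $p - 2 \geq 1$ admissible residues. Symmetrically for $d$ modulo the primes of $a_1$. Because $a_1, a_2$ are odd and coprime, the Chinese remainder theorem packages these local constraints with the parity constraint into global choices of $c, d$.

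For the second step, pick any particular solution $(\alpha_0, \beta_0)$ of $a_1 \alpha + a_2 \beta = N$; the general solution is $(\alpha_0 + a_2 t,\; \beta_0 - a_1 t)$ for $t \in \mathbb{Z}$. From $a_1 \alpha_0 \equiv N \pmod{a_2}$ and $\gcd(N, a_2) = 1$ one deduces $\gcd(\alpha, a_2) = 1$ for every $t$, and symmetrically $\gcd(\beta, a_1) = 1$. For $\gcd(\alpha, \beta) = 1$, any prime $p$ dividing both would have to divide $a_1 \alpha + a_2 \beta = N$ and be coprime to $a_1 a_2$ (else it would violate one of the earlier coprimalities); for each such $p$ the two conditions $p \mid \alpha$ and $p \mid \beta$ translate to the \emph{same} linear congruence on $t$ (their compatibility being precisely the fact that $p \mid N$), so only one residue mod $p$ is forbidden. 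Avoiding a single bad residue at each of the finitely many such primes is a routine CRT step.

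The main obstacle is the first step---guaranteeing $\gcd(N, a_1 a_2) = 1$ despite $N$ depending quadratically on $c$ and $d$. The crux is that $c(c+2)$ is a shifted square, so on reduction mod an odd prime dividing $a_2$ the forbidden values of $c$ are at most two out of $p \geq 3$, leaving room to maneuver. Once $\gcd(N, a_1 a_2) = 1$ is secured, the construction of $(\alpha, \beta)$ is a Bezout-plus-CRT juggling act.
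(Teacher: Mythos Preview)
Your proof is correct and follows essentially the same route as the paper: both substitute $\alpha=u-c$, $\beta=v-d$, first choose $c,d$ so that $N=(m-a_1c(c+2)-a_2d(d+2))/2$ is an integer coprime to $a_1a_2$ (using that the quadratic $c(c+2)$ misses at least one residue class mod each odd prime, plus a parity adjustment), and then solve $a_1\alpha+a_2\beta=N$ while tuning the free parameter $t$ in the general solution to enforce the remaining coprimality. The only cosmetic difference is in the last step: the paper observes directly that $\gcd(a_1\alpha,a_2\beta)=\gcd(a_1\alpha,N)=\gcd(\alpha,N)$ and arranges $\gcd(\alpha,N)=1$, whereas you unpack the condition into $\gcd(\alpha,a_2)=\gcd(\beta,a_1)=\gcd(\alpha,\beta)=1$ and handle the pieces separately; both arguments are equivalent.
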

\begin{proof}
Let the prime factors of $a_1$ be $p_1, \cdots, p_r$. For each $p_i$, there exists $d_i \in \mathbb{Z}$ such that $p_i$ does not divide $m-a_2(d_i^2+2d_i)$. Chinese remainder theorem implies that there exists $d \in \mathbb{Z}$ such that $gcd(a_1, m-a_2(d^2+2d))=1$. By the same reason, there exists $c \in \mathbb{Z}$ such that $gcd(a_2, m-a_1(c^2+2c))=1$. Thus we can find $c, d \in \mathbb{Z}$ such that $gcd(a_1a_2, m-a_1(c^2+2c)-a_2(d^2+2d))=1$. It is still true if we replace $c$ by $c+a_2$, so we can assume that $m-(c+d)$ is even. \\
Let $A=a_1(u-c)+a_2(v-d)$ and $B=a_1(c^2+2c)+a_2(d^2+2d)$. Then $a_1(c^2+2u)+a_2(d^2+2v)=2A+B$. By above argument, there exist $c,d \in \mathbb{Z}$ such that $m-B$ is even and $gcd(a_1a_2, m-B)=1$. 
Since $gcd(a_1,a_2)=gcd(m-B,a_2)=1$, we can choose $u,v,t \in \mathbb{Z}$ such that $A=\frac{m-B}{2}$ and $gcd(u+ta_2-c, A)=1$. 
If we replace $u$ and $v$ by $u+ta_2$ and $v-ta_1$, $A=\frac{m-B}{2}$ still holds and $gcd(a_1(u-c), a_2(v-d))=gcd(a_1(u-c), A)=1$. 
\end{proof} 

Now we provide an upper bound of $f(n)$ for each $n \geq 2$. 

\begin{theorem} \label{thm3.4}
For every positive integer $n$, $f(2n) \leq 6$. 
\end{theorem}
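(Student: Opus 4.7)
The plan is to prove $f(2n) \le 6$ by induction on $n$, with two regimes. The small cases are handled directly: for $n=1$, Theorem~\ref{thm2.1} gives $f(2) = 4 \le 6$, and for $n = 2$, Lemma~\ref{lem3.1} applied with $p = q = 2$ yields $f(4) \le 2 + f(2) = 6$. For $n \ge 3$, however, Lemma~\ref{lem3.1} alone is insufficient: any splitting $2n = p + q$ with $p, q \ge 2$ gives at best $f(2n) \le 2 + \max\{f(p), f(q)\}$, and the inductive hypothesis $f(2k) \le 6$ only yields $f(2n) \le 8$. A more direct construction is therefore required.

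For $n \ge 3$, I would parallel the strategy of Lemma~\ref{lem3.2}. Given $A \in M_{2n}(\mathbb{Z})$ and pairwise coprime $a_1, \ldots, a_6$ (at least five of which are odd), first use $a_1, a_2$ to subtract an explicit sum $a_1 E_1^2 + a_2 E_2^2$ from $A$ so that the result has a prescribed ``pivot'' entry equal to $1$ and an even trace. A unimodular conjugation by some $T \in \mathrm{GL}_{2n}(\mathbb{Z})$ then brings this into a normal form with first column $(0,\ldots,0,1)^T$ and a specific pattern in the last row, mirroring the $3 \times 3$ normalization in Lemma~\ref{lem3.2}. Next, using $a_3, a_4$, I would deploy matrices $X_3, X_4 \in M_{2n}(\mathbb{Z})$ satisfying $X_i = X_i^4$, obtained by block-embedding the $3 \times 3$ matrices of Lemma~\ref{lem3.2} into the appropriate corner, so that $a_i(X_i^2)^2 = a_i X_i$ contributes ``linearly'' to a fixed $3 \times 3$ sub-block. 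Finally, $a_5, a_6$ would be used with matrices $Y_5, Y_6$ whose squares produce the remaining entries; Lemma~\ref{lem3.3} is invoked to split the residual trace as $a_5(c^2 + 2u) + a_6(d^2 + 2v)$ subject to the coprimality $\gcd(a_5(u-c), a_6(v-d)) = 1$ needed to solve for $Y_5, Y_6$.

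The main obstacle will be extending the $3 \times 3$ construction of Lemma~\ref{lem3.2} to cover the additional $(2n-3) \times (2n-3)$ sub-block together with its off-diagonal entries: the $X_3, X_4$ matrices address only a small corner, so essentially all remaining structure must be carried by $Y_5^2$ and $Y_6^2$. I expect this to force an iterated application of Lemma~\ref{lem3.3} row by row, producing a delicate coprimality/parity bookkeeping analogous to Cases I--III in the proof of Theorem~\ref{thm2.1}. Verifying that the pairwise coprimality of $a_1, \ldots, a_6$ is strong enough to simultaneously satisfy all these side conditions at each row is the technical crux of the argument.
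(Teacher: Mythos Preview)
Your plan for $n=1,2$ is fine, but the approach for $n \ge 3$ has a genuine gap that you yourself flag: after the $3 \times 3$ block is handled by $a_3 X_3^2 + a_4 X_4^2$, you are left asking $a_5 Y_5^2 + a_6 Y_6^2$ to account for essentially an arbitrary $(2n-3)$-dimensional sub-block together with its off-diagonal interactions. But a two-term form $aX^2 + bY^2$ is \emph{not} universal over $M_k(\mathbb{Z})$ for any $k$ (already $X^2 + Y^2$ fails over $M_2(\mathbb{Z}_4)$, as in the proof of Theorem~\ref{thm2.1}), so the hope that ``iterated Lemma~\ref{lem3.3} row by row'' will close this cannot succeed in general. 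The idempotent trick $X_i = X_i^4$ from Lemma~\ref{lem3.2} is special to rank-$1$ nilpotent-plus-projection shapes and does not scale to absorb a full $(2n-3) \times (2n-3)$ block.

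The paper's argument is structurally different and avoids this obstruction entirely. It works with the $n \times n$ block decomposition $\left[\begin{smallmatrix} P & Q \\ R & S \end{smallmatrix}\right]$ of $M_{2n}(\mathbb{Z})$. The first two squares $a_1\left[\begin{smallmatrix} O & B_1 \\ C_1 & X_1 \end{smallmatrix}\right]^2 + a_2\left[\begin{smallmatrix} O & B_2 \\ C_2 & X_2 \end{smallmatrix}\right]^2$ are designed so that the off-diagonal blocks cancel (linear conditions $a_1 B_1 X_1 + a_2 B_2 X_2 = Q$, $a_1 X_1 C_1 + a_2 X_2 C_2 = R$) \emph{and} the resulting diagonal blocks $P_0, S_0$ satisfy $\operatorname{tr}(P_0) = \operatorname{tr}(S_0)$. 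Lemma~\ref{lem3.3} is used precisely here, once, to hit the trace condition $a_1(c^2+2u) + a_2(d^2+2v) = \operatorname{tr}(S) - \operatorname{tr}(P) - (n-1)(a_1+a_2)$ while keeping $\gcd(a_1(u-c), a_2(v-d)) = 1$ so the linear systems for $B_i, C_i$ are solvable. The decisive external input is then the Laffey--Reams theorem: any trace-zero integral matrix is a commutator, so $P_0 - S_0 = XY - YX$ for some $X, Y \in M_n(\mathbb{Z})$. With $N = P_0 - XY = S_0 - YX$ one gets the explicit identity
\[
\begin{bmatrix} P_0 & O \\ O & S_0 \end{bmatrix}
= a_3 \begin{bmatrix} O & t_3 X \\ Y & O \end{bmatrix}^2
+ a_4 \begin{bmatrix} O & t_4 X \\ Y & O \end{bmatrix}^2
+ a_5 \begin{bmatrix} O & t_5 N \\ I & O \end{bmatrix}^2
+ a_6 \begin{bmatrix} O & t_6 N \\ I & O \end{bmatrix}^2,
\]
where $a_3 t_3 + a_4 t_4 = a_5 t_5 + a_6 t_6 = 1$. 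This commutator step is the idea your proposal is missing; it is exactly what makes the even-dimensional case cleaner than the odd one.
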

\begin{proof}
Since $f(2)=4$, we can assume that $n \geq 2$. Without loss of generality, assume that $a_1$ and $a_2$ are odd. Fix a matrix
$\begin{bmatrix}
P & Q\\ 
R & S
\end{bmatrix} \in M_{2n}(\mathbb{Z})$ ($P,Q,R,S \in M_{n}(\mathbb{Z})$). Let $t_1$ and $t_2$ be integers such that $a_1t_1+a_2t_2=1$, $Q=\begin{bmatrix}
q_1, \cdots, q_n
\end{bmatrix}$ and
$R=\begin{bmatrix}
r_1, \cdots, r_n
\end{bmatrix}^T$. Also let
\begin{equation}
X_1=\begin{bmatrix}
c & u &  & & \\ 
1 & 1 &  & & \\ 
 &  & 1 & & \\ 
 &  & & \ddots & \\ 
 &  &  & & 1
\end{bmatrix}, \, 
X_2=\begin{bmatrix}
d & v &  & & \\ 
1 & 1 &  & & \\ 
 &  & 1 & & \\ 
 &  & & \ddots & \\ 
 &  &  & & 1
\end{bmatrix}
\end{equation}
and $
B_1=\begin{bmatrix}
\alpha_1, \alpha_2, t_1q_3, \cdots, t_1q_n
\end{bmatrix}, \, 
B_2=\begin{bmatrix}
\alpha_3, \alpha_4, t_2q_3, \cdots, t_2q_n
\end{bmatrix}$, \\
$C_1=\begin{bmatrix}
\beta_1-\beta_2, \beta_2, t_1r_3, \cdots, t_1r_n
\end{bmatrix}^T, \, 
C_2=\begin{bmatrix}
\beta_3-\beta_4, \beta_4, t_2r_3, \cdots, t_2r_n
\end{bmatrix}^T$
where $c,d,u,v \in \mathbb{Z}$ and $\alpha_i, \beta_i \in M_{n \times 1}(\mathbb{Z})$ for $1 \leq i \leq 4$. 
Now it is easy to show that $a_1B_1X_1+a_2B_2X_2=Q$ if and only if the equations \eqref{eq2a} and \eqref{eq2b} hold, and $a_1X_1C_1+a_2X_2C_2=R$ if and only if the equations \eqref{eq2c} and \eqref{eq2d} hold. 

\begin{subequations} \label{eq6}
\begin{gather}
a_1(u-c)\alpha_1+a_2(v-d)\alpha_3=q_2-q_1 \label{eq2a} \\
a_1\alpha_2+a_2\alpha_4=q_1-(a_1c\alpha_1+a_2d\alpha_3) \label{eq2b} \\
a_1\beta_1+a_2\beta_3=r_2 \label{eq2c} \\
a_1(u-c)\beta_2+a_2(v-d)\beta_4=r_1-(a_1c\beta_1+a_2d\beta_3) \label{eq2d}
\end{gather}
\end{subequations}

By lemma \ref{lem3.3}, there exist $c,d,u,v \in \mathbb{Z}$ such that $gcd(a_1(u-c), a_2(v-d))=1$ and $a_1(c^2+2u)+a_2(d^2+2v)=tr(S)-tr(P)-(n-1)(a_1+a_2)$. Thus we can find $\alpha_1$, $\alpha_3$, $\beta_1$ and $\beta_3$ which satisfy the equations \eqref{eq2a} and \eqref{eq2c},
and $\alpha_2$, $\alpha_4$, $\beta_2$ and $\beta_4$ which satisfy the equations \eqref{eq2b} and \eqref{eq2d}. Then
\begin{equation} \label{eq7}
\begin{bmatrix}
P & Q\\ 
R & S
\end{bmatrix}
-a_1\begin{bmatrix}
O & B_1\\ 
C_1 & X_1
\end{bmatrix}^2
-a_2\begin{bmatrix}
O & B_2\\ 
C_2 & X_2
\end{bmatrix}^2=
\begin{bmatrix}
P_0 & O\\ 
O & S_0
\end{bmatrix}
\end{equation}
where $P_0=P-a_1B_1C_1-a_2B_2C_2$ and $S_0=S-a_1(C_1B_1+X_1^2)-a_2(C_2B_2+X_2^2)$. By simple calculation, $tr(a_1X_1^2+a_2X_2^2)=(n-1)(a_1+a_2)+(a_1(c^2+2u)+a_2(d^2+2v))=tr(S)-tr(P)$ and $tr(P_0-S_0)=tr(P)-tr(S)-a_1tr(B_1C_1-C_1B_1)-a_2tr(B_2C_2-C_2B_2)+tr(a_1X_1^2+a_2X_2^2)=0$.  \\
From the result in \cite{lef94}, we can deduce that there exist $X,Y \in M_n(\mathbb{Z})$ such that $P_0-S_0=XY-YX$. Let $t_3, t_4, t_5$ and $t_6$ be integers such that $a_3t_3+a_4t_4=a_5t_5+a_6t_6=1$. Then
\begin{equation}
\begin{bmatrix}
P_0 & O\\ 
O & S_0
\end{bmatrix} 
=
a_3\begin{bmatrix}
O& t_3X\\ 
Y & O
\end{bmatrix} ^2
+
a_4\begin{bmatrix}
O & t_4X\\ 
Y & O
\end{bmatrix} ^2
+
a_5\begin{bmatrix}
O & t_5N\\ 
I & O
\end{bmatrix} ^2
+
a_6\begin{bmatrix}
O & t_6N\\ 
I & O
\end{bmatrix} ^2
\end{equation}
where $N=P_0-XY=S_0-YX$. Thus $f(2n) \leq 6$. 
\end{proof} 

\begin{theorem} \label{thm3.5}
For every positive integer $n$, $f(2n+1) \leq 8$. 
\end{theorem}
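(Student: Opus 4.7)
The plan is to derive the bound $f(2n+1) \leq 8$ as an immediate consequence of the results already established: Lemma \ref{lem3.1} (the recursion $f(p+q) \leq 2 + \max\{f(p), f(q)\}$), Lemma \ref{lem3.2} ($f(3) \leq 6$), and Theorem \ref{thm3.4} ($f(2k) \leq 6$ for all $k \geq 1$). So no genuinely new construction is needed; the work is just splitting $2n+1$ into two blocks of sizes that fall under the cases already handled.

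First I would dispose of the base case $n=1$ trivially: Lemma \ref{lem3.2} gives $f(3) \leq 6 \leq 8$. For $n \geq 2$, I would write $2n+1 = 3 + (2n-2)$, where the summand $2n-2$ satisfies $2n-2 \geq 2$, so the hypothesis of Lemma \ref{lem3.1} is met. Applying that lemma yields
\[
f(2n+1) \leq 2 + \max\{f(3),\, f(2n-2)\}.
\]
Then Lemma \ref{lem3.2} gives $f(3) \leq 6$ and Theorem \ref{thm3.4} gives $f(2n-2) \leq 6$, so the right-hand side is at most $2 + 6 = 8$, as required.

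There is no real obstacle here; the only point worth checking is that the splitting $2n+1 = 3 + (2n-2)$ is valid in the framework of Lemma \ref{lem3.1}, which needs both summands to be at least $2$. That forces the separate treatment of $n=1$, but once $n \geq 2$ it is automatic. The pairwise coprimality of $a_1,\ldots,a_m$ assumed throughout Section \ref{section3} is preserved under the restriction to any subset used inside the proof of Lemma \ref{lem3.1}, so no additional hypotheses need to be tracked.
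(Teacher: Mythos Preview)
Your proof is correct and follows exactly the same route as the paper: handle $n=1$ via Lemma~\ref{lem3.2}, and for $n\geq 2$ split $2n+1=3+(2n-2)$ and combine Lemma~\ref{lem3.1}, Lemma~\ref{lem3.2}, and Theorem~\ref{thm3.4}.
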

\begin{proof}
If $n=1$, $f(3) \leq 6$ by lemma \ref{lem3.2}. For $n \geq 2$, lemma \ref{lem3.1} and theorem \ref{thm3.4} implies $f(2n+1) \leq 2+max\left \{ f(3),f(2n-2) \right \} \leq 8$.
\end{proof}

\vspace{0.5cm}

Department of Mathematical Sciences, Seoul National University, Seoul 151-747, Korea 

e-mail: moleculesum@snu.ac.kr


\begin{thebibliography}{99}

\bibitem{var86}
Vaserstein LN. Every integral matrix is the sum of three squares. Linear Multilinear Algebra. 1986;20:1--4.

\bibitem{ric87}
Richman DR. The Waring problem for matrices. Linear Multilinear Algebra. 1987;22:171--192. 

\bibitem{var87}
Vaserstein LN. On the sum powers of matrices. Linear Multilinear Algebra. 1987;21:261--270.

\bibitem{lef94}
Laffey TJ, Reams R. Integral similarity and commutators of integral matrices. Linear Algebra Appl. 1994;197/198:671--689.

\end{thebibliography}
\end{document}